\providecommand{\U}[1]{\protect\rule{.1in}{.1in}}
\newtheorem{theorem}{Theorem}[section]
\newtheorem{remark}[theorem]{Remark}
\numberwithin{equation}{section}
\begin{document}

\title[On the constants for the real Bohnenblust-Hille inequality]{Some improvements on the constants for the real Bohnenblust-Hille inequality}

\author{Daniel Pellegrino \and Juan B. Seoane-Sep\'{u}lveda\textsuperscript{*}}

\address{Departamento de An\'{a}lisis Matem\'{a}tico,\newline\indent Facultad de Ciencias Matem\'{a}ticas, \newline\indent Plaza de Ciencias 3, \newline\indent Universidad Complutense de Madrid,\newline\indent Madrid, 28040, Spain.}
\email{jseoane@mat.ucm.es}

\address{Departamento de Matem\'{a}tica, \newline\indent Universidade Federal da Para\'{\i}ba, \newline\indent 58.051-900 - Jo\~{a}o Pessoa, Brazil.} \email{dmpellegrino@gmail.com}




\thanks{\textsuperscript{*}Supported by the Spanish Ministry of Science and Innovation, grant MTM2009-07848.}

\begin{abstract}

A classical inequality due to Bohnenblust and Hille states that for every $N \in \mathbb{N}$ and every
$m$-linear mapping $U:\ell_{\infty}^{N}\times\cdots\times\ell_{\infty}^{N}\rightarrow\mathbb{C}$ we have
\[
\left(  \sum\limits_{i_{1},...,i_{m}=1}^{N}\left\vert U(e_{i_{^{1}}},...,e_{i_{m}})\right\vert ^{\frac{2m}{m+1}}\right)  ^{\frac{m+1}{2m}}\leq
C_{m}\left\Vert U\right\Vert,
\]
where $C_{m}=2^{\frac{m-1}{2}}$. The result is also true for real Banach spaces. In this note we show that an adequate use of a recent new proof of
Bohnenblust-Hille inequality, due to Defant, Popa and Schwarting, combined
with the optimal constants of Khinchine's inequality (due to Haagerup) provides
quite better estimates for the constants involved in the real
Bohnenblust-Hille inequality. For instance, for $2\leq m\leq 14,$ we show that
the constants $C_{m}=2^\frac{m-1}{2}$ can be replaced by $2^{\frac{m^{2}+6m-8}{8m}}$ if $m$ is even and by $2^{\frac{m^{2}+6m-7}{8m}}$ if $m$ is odd, which substantially improve the known values of $C_{m}$. We also show that the new constants present a better asymptotic behavior.
\end{abstract}

\maketitle

\section{Preliminaries and background}

In 1931, Bohnenblust and Hille (\cite{bh}, or the more recent \cite{defant2, defant}) asserted that for every positive integer $N$ and every
$m$-linear mapping $U:\ell_{\infty}^{N}\times\cdots\times\ell_{\infty}%
^{N}\rightarrow\mathbb{C}$ we have%
\[
\left(  \sum\limits_{i_{1},...,i_{m}=1}^{N}\left\vert U(e_{i_{^{1}}%
},...,e_{i_{m}})\right\vert ^{\frac{2m}{m+1}}\right)  ^{\frac{m+1}{2m}}\leq
C_{m}\left\Vert U\right\Vert,
\]
where $C_{m}=2^{\frac{m-1}{2}}$ (actually this result also holds for real
Banach spaces). The case $m=2$ is a famous result known as Littlewood's $4/3$-inequality. It seems that the Bohnenblust-Hille inequality was overlooked
and was only re-discovered several decades later by Davie \cite{Davie} and
Kaijser \cite{Ka}. While the exponent $\frac{2m}{m+1}$ is optimal, the constant
$C_{m}=2^{\frac{m-1}{2}}$ is not. Very recently, A. Defant and P.
Sevilla-Peris \cite[Section 4]{defant2} indicated that by using Sawa's
estimate for the constant of the complex Khinchine's inequatily in Steinhaus
variables (see \cite{sawa}) it is possible to prove that $C_{m}\leq\left(
\frac{2}{\sqrt{\pi}}\right)^{m-1}$ in the complex case (this is a strong
improvement of the previous constants and it seems that these are the best
known estimates for the complex case).

The (complex and real) Bohnenblust-Hille inequality can be re-written in the
context of multiple summing multilinear operators, as we will see below.

Multiple summing multilinear mappings between Banach spaces is a recent, very important and useful nonlinear generalization of the concept of absolutely
summing linear operators. This class was introduced, independently, by Matos
\cite{collect} (under the terminology fully summing multilinear mappings) and
Bombal, P\'{e}rez-Garc\'{\i}a and Villanueva \cite{bpgv}. \qquad

Throughout this paper $X_{1},\ldots,X_{m}$ and $Y$ will stand for Banach
spaces over $\mathbb{K}=\mathbb{R}$ or $\mathbb{C}$, and $X^{\prime}$ stands for the
dual of $X$. By $\mathcal{L}(X_{1},\ldots,X_{m};Y)$ we denote the Banach space
of all continuous $m$-linear mappings from $X_{1}\times\cdots\times X_{m}$ to
$Y$ with the usual sup norm.

For $x_{1},...,x_{n}$ in $X$, let
\[
\Vert(x_{j})_{j=1}^{n}\Vert_{w,1}:=\sup\{\Vert(\varphi(x_{j}))_{j=1}^{n}%
\Vert_{1}:\varphi\in X^{\prime},\Vert\varphi\Vert\leq1\}.
\]
If $1\leq p<\infty$, an $m$-linear mapping $U\in\mathcal{L}(X_{1},\ldots
,X_{m};Y)$ is multiple $(p;1)$-summing (denoted $\Pi_{(p;1)}(X_{1}%
,\ldots,X_{m};Y)$) if there is a constant $L_{m}\geq0$ such that
\begin{equation}
\left(  \sum_{j_{1},\ldots,j_{m}=1}^{N}\left\Vert U(x_{j_{1}}^{(1)}%
,\ldots,x_{j_{m}}^{(m)})\right\Vert ^{p}\right)  ^{\frac{1}{p}}\leq L_{m}%
\prod_{k=1}^{m}\left\Vert (x_{j}^{(k)})_{j=1}^{N}\right\Vert _{w,1}
\label{lhs}%
\end{equation}
for every $N\in\mathbb{N}$ and any $x_{j_{k}}^{(k)}\in X_{k}$, $j_{k}%
=1,\ldots,N$, $k=1,\ldots,m$. The infimum of the constants satisfying
(\ref{lhs}) is denoted by $\left\Vert U\right\Vert _{\pi(p;1)}$. For $m=1$ we
have the classical concept of absolutely $(p;1)$-summing operators (see, e.g. \cite{defant3, Di}).

A simple reformulation of Bohnenblust-Hille inequality asserts that every
continuous $m$-linear form $T:X_{1}\times\cdots\times X_{m}\rightarrow
\mathbb{K}$ is multiple $(\frac{2m}{m+1};1)$-summing with $L_{m}%
=C_{m}=2^{\frac{m-1}{2}}$ (or $L_{m}=\left(  \frac{2}{\sqrt{\pi}}\right)
^{m-1}$ for the complex case, using the estimates of Defant and
Sevilla-Peris, \cite{defant2})$.$ However, in the real case the best constants known seem to
be $C_{m}=2^{\frac{m-1}{2}}$.

The main goal of this note is to obtain better constants for the
Bohnenblust-Hille inequality in the real case. For this task we will use a
recent proof of a general vector-valued version of Bohnenblust-Hille
inequality (\cite[Theorem 5.1]{defant}). The inequality of Bohnenblust-Hille is stated in \cite[Corollary 2]{defant} as a consequence of \cite[Theorem 5.1]{defant}.
The procedure of the proof of \cite[Corollary 2]{defant} allows us to obtain
quite better values than $C_{m}=2^{\frac{m-1}{2}}$. However, in this note we explore the ideas of
\cite{defant} in a different way, in order to obtain even better estimates for
the constants that can be derived from \cite[Corollary 2]{defant}. The
constants we obtain can be derived from \cite[Theorem 5.1]{defant} through an
adequate choice of variables.

Let us recall some results that we will need in this note. The first result is a
well-known inequality due to Khinchine (see \cite{Di}):

\begin{theorem}[Khinchine's inequality]\label{k}
For all $0<p<\infty$, there are constants $A_{p}$ and $B_{p}$ such that
\begin{equation}
A_{p}\left(  \sum_{n=1}^{N}\left\vert a_{n}\right\vert ^{2}\right)  ^{\frac
{1}{2}}\leq\left(  \int_{0}^{1}\left\vert \sum_{n=1}^{N}a_{n}r_{n}\left(
t\right)  \right\vert ^{p}dt\right)  ^{\frac{1}{p}}\leq B_{p}\left(
\sum_{n=1}^{N}\left\vert a_{n}\right\vert ^{2}\right)  ^{\frac{1}{2}}
\label{lpo}%
\end{equation}
for all positive integer $N$ and scalars $a_{1},...,a_{n}$ (here, $r_{n}$
denotes the $n$-Rademacher function)$.$
\end{theorem}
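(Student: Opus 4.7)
The plan is to reduce everything to the $L^2$ case, where the inequality is an equality with $A_2=B_2=1$. Indeed, the Rademacher functions form an orthonormal system in $L^2([0,1])$, so $\int_0^1 \bigl|\sum_{n=1}^N a_n r_n(t)\bigr|^2 dt = \sum_{n=1}^N |a_n|^2$. The task then is to compare $L^p$ and $L^2$ norms for Rademacher sums, which reduces to controlling the moments of a symmetric sub-Gaussian random variable.

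For the range $p \geq 2$ I would establish the upper bound via the moment generating function method. Using independence and the elementary inequality $\cosh(x) \leq e^{x^2/2}$ (which follows from comparing Taylor series), one gets
\[
\int_0^1 \exp\Bigl(\lambda \sum_{n=1}^N a_n r_n(t)\Bigr)\,dt = \prod_{n=1}^N \cosh(\lambda a_n) \leq \exp\!\Bigl(\tfrac{\lambda^2}{2}\sum_{n=1}^N |a_n|^2\Bigr),
\]
which gives the sub-Gaussian tail bound $\mu\{|S|>t\} \leq 2\exp(-t^2/(2\sigma^2))$ with $\sigma^2=\sum |a_n|^2$, and hence after integrating $p t^{p-1}$ against the tail one obtains $B_p \leq C\sqrt{p}$ for an absolute constant $C$. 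The matching lower bound $A_p \geq 1$ is then immediate from the fact that on a probability space $\|\cdot\|_p \geq \|\cdot\|_2$ when $p \geq 2$.

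For the range $0 < p < 2$ the upper bound $B_p \leq 1$ is again immediate from $\|\cdot\|_p \leq \|\cdot\|_2$ on a probability space. The lower bound is the nontrivial direction and is handled by a standard interpolation trick: choose any $q>2$ (say $q=4$) and pick $\theta \in (0,1)$ with $\tfrac{1}{2} = \tfrac{\theta}{p} + \tfrac{1-\theta}{q}$. Hölder's inequality then gives
\[
\Bigl(\sum |a_n|^2\Bigr)^{1/2} = \|S\|_2 \leq \|S\|_p^\theta \, \|S\|_q^{1-\theta},
\]
and substituting the already-proved upper bound at exponent $q$ yields a positive constant $A_p$ such that $\|S\|_p \geq A_p (\sum |a_n|^2)^{1/2}$.

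The main obstacle in this line of argument is not establishing the existence of constants, which the above strategy accomplishes fairly quickly, but pinning down their \emph{sharp} values; this is the deep theorem of Haagerup invoked later in the paper. Since the statement as recalled here only asserts existence, I would stop at the soft argument above and defer the optimal-constants discussion to the references.
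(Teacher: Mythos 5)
The paper does not actually prove Theorem~\ref{k}: it is quoted as a classical fact with a pointer to the reference \cite{Di}, and the only thing the paper genuinely uses about it is Haagerup's sharp values of $A_p$, invoked separately from \cite{haag}. So there is no proof in the text to compare against. That said, your sketch is a correct and standard route: orthonormality handles $p=2$; for $p\geq 2$ the bound $\prod_n\cosh(\lambda a_n)\leq \exp(\lambda^2\sum_n|a_n|^2/2)$ gives the sub-Gaussian tail and hence $B_p\lesssim\sqrt{p}$, while $A_p=1$ follows from monotonicity of $L^p$-norms on a probability space; for $0<p<2$ the upper bound is Jensen and the lower bound is the log-convexity (Lyapunov/H\"older) step $\|S\|_2\leq\|S\|_p^{\theta}\|S\|_q^{1-\theta}$, which is valid even for $p<1$ because the H\"older exponents one actually uses after raising to the $p$-th power are $\geq 1$. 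Two cosmetic points worth tightening if you were to write this out in full: for complex $a_n$ one should split into real and imaginary parts (or replace $a_n$ by $|a_n|$ after a symmetrization) before applying the $\cosh$ bound, and it is worth recording that the resulting $A_p$ for $p<2$ is $B_q^{-(1-\theta)/\theta}$, which is strictly positive but far from Haagerup's optimal $A_p=2^{1/2-1/p}$ (for $p$ in the relevant range) that the paper actually needs for its numerical estimates.
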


Above, it is clear that $B_{2}=1.$ From (\ref{lpo}) it follows that%
\begin{equation}
\left(  \int_{0}^{1}\left\vert \sum_{n=1}^{N}a_{n}r_{n}\left(  t\right)
\right\vert ^{p}dt\right)  ^{\frac{1}{p}}\leq B_{p}A_{r}^{-1}\left(  \int
_{0}^{1}\left\vert \sum_{n=1}^{N}a_{n}r_{n}\left(  t\right)  \right\vert
^{r}dt\right)  ^{\frac{1}{r}} \label{kcc}%
\end{equation}
and the product of the constants $B_{p}A_{r}^{-1}$ will appear later on
Theorem \ref{d}.

The notation of $A_{p}$ and $B_{p}$ will be kept along the paper. Now, let us recall a variation of an inequality due to Blei (see \cite[Lemma 3.1]{defant}).

\begin{theorem}[Blei, Defant et al.]\label{b}
Let $A$ and $B$ be two finite non-void index
sets, and $(a_{ij})_{(i,j)\in A\times B}$ a scalar matrix with positive
entries, and denote its columns by $\alpha_{j}=(a_{ij})_{i\in A}$ and its rows
by $\beta_{i}=(a_{ij})_{j\in B}.$ Then, for $q,s_{1},s_{2}\geq1$ with
$q>\max(s_{1},s_{2})$ we have%
\[
\left(  \sum_{(i,j)\in A\times B}a_{ij}^{w(s_{1},s_{2})}\right)  ^{\frac
{1}{w(s_{1},s_{2})}}\leq\left(  \sum_{i\in A}\left\Vert \beta_{i}\right\Vert
_{q}^{s_{1}}\right)  ^{\frac{f(s_{1},s_{2})}{s_{1}}}\left(  \sum_{j\in
B}\left\Vert \alpha_{j}\right\Vert _{q}^{s_{2}}\right)  ^{\frac{f(s_{2}%
,s_{1})}{s_{2}}},
\]
with%
\begin{align*}
w  &  :[1,q)^{2}\rightarrow\lbrack0,\infty),\text{ }w(x,y):=\frac
{q^{2}(x+y)-2qxy}{q^{2}-xy},\\
f  &  :[1,q)^{2}\rightarrow\lbrack0,\infty),\text{ }f(x,y):=\frac{q^{2}%
x-qxy}{q^{2}(x+y)-2qxy}.
\end{align*}

\end{theorem}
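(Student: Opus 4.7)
The plan is to obtain the inequality as a double application of H\"older's inequality, exploiting the row and column $\ell_q$ structures of $(a_{ij})$. The guiding heuristic is that $w(s_1,s_2)$ is a weighted harmonic-type mean of $s_1$, $s_2$ and $q$, so it should arise as a H\"older interpolation between a row-wise $\ell^{s_1}(\ell^q)$-estimate and a column-wise $\ell^{s_2}(\ell^q)$-estimate.

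Concretely, I would proceed as follows. Pick an interpolation parameter $\theta\in(0,1)$ to be determined, together with auxiliary exponents $P,Q>0$ satisfying $\theta P+(1-\theta)Q=w(s_1,s_2)$, and split each entry as
\[
a_{ij}^{w(s_1,s_2)}=\bigl(a_{ij}^{P}\bigr)^{\theta}\bigl(a_{ij}^{Q}\bigr)^{1-\theta}.
\]
Summing over $(i,j)\in A\times B$ and applying H\"older's inequality with conjugate exponents $1/\theta$ and $1/(1-\theta)$ yields
\[
\sum_{(i,j)\in A\times B}a_{ij}^{w(s_1,s_2)}\le\Bigl(\sum_{(i,j)}a_{ij}^{P}\Bigr)^{\theta}\Bigl(\sum_{(i,j)}a_{ij}^{Q}\Bigr)^{1-\theta}.
\]
On the first factor I would apply H\"older along the column index with exponents $q/s_1$ and its conjugate (legitimate because $q>s_1$), producing a bound in terms of $\sum_i\|\beta_i\|_q^{s_1}$; a symmetric step along the row index in the second factor produces $\sum_j\|\alpha_j\|_q^{s_2}$. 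The parameters $\theta$, $P$, $Q$ are then pinned down by three requirements: that the outer exponents on the row/column sums end up being exactly $s_1$ and $s_2$, that the total exponent on $a_{ij}$ is $w$, and that the spurious cardinality factors $|A|^{\,\cdot}$ and $|B|^{\,\cdot}$ cancel. Solving this linear system should yield precisely the closed forms $w(s_1,s_2)$ and $f(s_1,s_2)$ stated in the theorem.

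\emph{The main obstacle} is the algebraic bookkeeping: one must verify that the prescribed $w(x,y)$ and $f(x,y)$ are the unique solution of the resulting linear system, and that the hypothesis $q>\max(s_1,s_2)$ is exactly what guarantees positivity of all auxiliary exponents and validity of every H\"older step along the way. A cleaner conceptual route, which I would prefer if space permitted, is to recognize the bound as an embedding between mixed-norm sequence spaces of the form $\ell^{s_1}(\ell^q)\cdot\ell^{s_2}(\ell^q)\hookrightarrow\ell^{w(s_1,s_2)}(A\times B)$ obtained by complex interpolation between the trivial limiting cases (compare \cite[Lemma 3.1]{defant}); this bypasses the bookkeeping of cardinality factors and makes transparent why the exponents take the precise rational form displayed in the statement.
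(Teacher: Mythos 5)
The paper does not actually prove Theorem~\ref{b}; it is quoted verbatim as \cite[Lemma~3.1]{defant}, so there is no in-paper argument to compare you against. Judged on its own terms, however, your concrete plan has a gap that would kill it. After the first global H\"older step you are left with two sums of the form $\sum_{(i,j)}a_{ij}^{P}$ and $\sum_{(i,j)}a_{ij}^{Q}$, and the only way to turn $\sum_{j}a_{ij}^{P}$ into the quantity $\|\beta_i\|_q^{s_1}=(\sum_{j}a_{ij}^{q})^{s_1/q}$ using a single H\"older ``along the column index with exponents $q/s_1$ and its conjugate'' is to pair $a_{ij}^{P}$ against the constant function $1$, which produces a factor $|B|^{1-s_1/q}$; the symmetric step produces $|A|^{1-s_2/q}$. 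These cardinalities do \emph{not} cancel: in the end one gets an extra factor $|A|^{(1-\theta)(1-s_2/q)}\,|B|^{\theta(1-s_1/q)}$, and since $q>\max(s_1,s_2)$ both exponents are strictly positive for any $\theta\in(0,1)$. The hypothesis $q>\max(s_1,s_2)$ therefore makes the spurious factors unavoidable, not innocuous. Your own ``main obstacle'' paragraph flags this worry, but the hoped-for cancellation is structurally impossible.

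The missing idea is to \emph{never} sum over both indices at once. The actual Blei-type mechanism (already visible in the baby case $q=2$, $s_1=s_2=1$, where the conclusion is the classical $\ell^{4/3}$ inequality) is: (i) split $a_{ij}^{w}$ as a product of two powers and apply H\"older \emph{inside the $j$-sum only}, keeping $i$ fixed, which produces $\|\beta_i\|_q^{\alpha}\cdot(\sum_j a_{ij}^{\gamma})^{\delta}$; (ii) sum over $i$ and apply a second H\"older in $i$; (iii) use Minkowski's inequality for mixed norms, $\ell^r(\ell^p)\hookrightarrow\ell^p(\ell^r)$ with constant $1$ for $r\le p$, to convert the resulting $\ell^p_i(\ell^1_j)$-type quantity into $\sum_j\|\alpha_j\|_q^{s_2}$. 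It is this Minkowski step, which you do not invoke, that replaces (and avoids) the cardinality factors, and it is where the rational forms of $w$ and $f$ come from. Your closing remark about complex interpolation of mixed-norm lattices points vaguely in the right direction, but as stated it is not a proof either: the relevant endpoint estimates and the verification that interpolation gives constant $1$ (rather than something depending on $|A|$, $|B|$) are exactly what would need to be supplied, and in this setting the standard route is the elementary H\"older--Minkowski iteration above rather than interpolation.
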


The following theorem is a particular case of \cite[Lemma 2.2]{defant}, for
$Y=\mathbb{K}$, using that the cotype $2$ constant of $\mathbb{K}$ is $1$,
i.e., $C_{2}(\mathbb{K})=1$ (following the notation from \cite{defant})$:$

\begin{theorem}[Defant et al]\label{d}
Let $1\leq r\leq2$, and let $(y_{i_{1},...,i_{m}%
})_{i_{1},...,i_{m}=1}^{N}$ be a matrix in $\mathbb{K}$. Then%
\begin{align*}
&  \left(  \sum\limits_{i_{1},...,i_{m}=1}^{N}\left\vert y_{i_{1}...i_{m}%
}\right\vert ^{2}\right)  ^{1/2} \leq\left(  A_{2,r}\right)  ^{m}\left(  \int\nolimits_{[0,1]^{m}}\left\vert
\sum\limits_{i_{1},...,i_{m}=1}^{N}r_{i_{1}}(t_{1})...r_{i_{m}}(t_{m}%
)y_{i_{1}...i_{m}}\right\vert ^{r}dt_{1}...dt_{m}\right)  ^{1/r},
\end{align*}
where%
\[
A_{2,r}\leq A_{r}^{-1}B_{2}=A_{r}^{-1}\text{ (because }B_{2}=1\text{).}%
\]

\end{theorem}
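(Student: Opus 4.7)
The plan is to iterate the scalar Khinchine inequality (Theorem \ref{k}) one Rademacher variable at a time, alternating with Minkowski's integral inequality (in the direction that is valid precisely because $r\le 2$). For $0\le k\le m$ introduce the hybrid expressions
$$g_k(t_1,\dots,t_k):=\left(\sum_{i_{k+1},\dots,i_m=1}^{N}\left|\sum_{i_1,\dots,i_k=1}^{N} r_{i_1}(t_1)\cdots r_{i_k}(t_k)\, y_{i_1\dots i_m}\right|^{2}\right)^{1/2},$$
so that $g_0$ is precisely the constant appearing on the left-hand side of the claim and $g_m(t_1,\dots,t_m)$ is the modulus of the full Rademacher polynomial on the right-hand side. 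The theorem is then reduced to the one-step estimate
$$A_r\left(\int_{[0,1]^{k-1}} g_{k-1}^{\,r}\,dt_1\cdots dt_{k-1}\right)^{1/r}\le\left(\int_{[0,1]^{k}} g_{k}^{\,r}\,dt_1\cdots dt_{k}\right)^{1/r}\qquad(1\le k\le m),$$
since iterating it $m$ times yields the announced constant $A_r^{-m}$.

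For the one-step bound, freeze $(t_1,\dots,t_{k-1})$ and set $a_{i_k,\dots,i_m}:=\sum_{i_1,\dots,i_{k-1}}r_{i_1}(t_1)\cdots r_{i_{k-1}}(t_{k-1})\,y_{i_1\dots i_m}$. For each fixed tuple $(i_{k+1},\dots,i_m)$ apply Khinchine's inequality in the single variable $t_k$ with coefficients $(a_{i_k,\dots,i_m})_{i_k}$, square, sum over $(i_{k+1},\dots,i_m)$, and take a square root; this yields
$$A_r\,g_{k-1}(t_1,\dots,t_{k-1})\le\left(\sum_{i_{k+1},\dots,i_m}\left(\int_0^1\left|\sum_{i_k}r_{i_k}(t_k)\,a_{i_k,\dots,i_m}\right|^{r}dt_k\right)^{2/r}\right)^{1/2}.$$
Because $r\le 2$, Minkowski's integral inequality allows the exchange $\|\cdot\|_{\ell^{2}(L^{r})}\le\|\cdot\|_{L^{r}(\ell^{2})}$, so the right-hand side is bounded above by $\bigl(\int_0^1 g_k(t_1,\dots,t_k)^{r}\,dt_k\bigr)^{1/r}$. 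Raising to the $r$-th power and integrating in $(t_1,\dots,t_{k-1})$ completes the one-step estimate.

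The only real subtlety is checking that Minkowski is invoked in the correct direction, namely $\|\cdot\|_{\ell^{2}(L^{r})}\le\|\cdot\|_{L^{r}(\ell^{2})}$: this holds because the larger of the two exponents sits on the outside, which corresponds exactly to the hypothesis $r\le 2$. Collecting the factor $A_r^{-1}$ from each of the $m$ iterations, together with $B_2=1$ from (\ref{lpo}), yields $A_{2,r}\le A_r^{-1}B_2=A_r^{-1}$, as stated.
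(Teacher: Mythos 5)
Your proof is correct. Since the paper does not prove Theorem~\ref{d} at all (it simply cites it as the scalar case of Lemma~2.2 of Defant--Popa--Schwarting, using $C_2(\mathbb{K})=1$), the relevant comparison is with the standard argument behind that lemma, which your proof reproduces faithfully in the scalar setting. The structure is exactly the expected one: define the hybrid quantities $g_k$ interpolating between the $\ell^2$-norm of the coefficients ($k=0$) and the modulus of the full $m$-fold Rademacher polynomial ($k=m$), establish a one-step inequality $A_r\|g_{k-1}\|_{L^r}\le\|g_k\|_{L^r}$ by freezing $(t_1,\dots,t_{k-1})$, applying the lower Khinchine bound $A_r\|\cdot\|_{\ell^2}\le\|\cdot\|_{L^r}$ in the single variable $t_k$, and then using the mixed-norm Minkowski inequality $\|\cdot\|_{\ell^2(L^r)}\le\|\cdot\|_{L^r(\ell^2)}$ (valid because $r\le 2$, i.e.\ the smaller exponent ends up outside on the right-hand side) to bring the $\ell^2$-sum over the remaining indices inside the integral; iterating this $m$ times accumulates the factor $A_r^{-m}$. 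Your one-line verification of the Minkowski direction is correct, and the appearance of $B_2=1$ is, as you note, trivial in the scalar case, so the constant $(A_{2,r})^m\le A_r^{-m}$ is exactly what you obtain. The only caveat worth flagging is that you have supplied a proof for the scalar specialization only; the cited Lemma~2.2 of Defant--Popa--Schwarting is stated for cotype-$2$-valued coefficients, where the inner Minkowski step must be replaced by the cotype inequality, which is where the extra multiplicative constant ($B_2$ becoming the cotype constant of $Y$) actually earns its keep.
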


The meaning of $A_{2,r}$, $w$ and $f$ from the above theorems will also be
kept in the next section. Moreover, $K_{G}$ will denote the complex Grothendieck constant.

\section{Improved constants for the Bohnenblust-Hille theorem}

The main results from \cite{defant}, Theorem 5.1 and Corollary 5.2, are very
interesting vector-valued generalizations of Bohnenblust-Hille inequality. In
this note we explore the proof of \cite[Theorem 5.1]{defant} in such a way
that the constants obtained are better than those that can be derived from
\cite[Corollary 5.2]{defant}. We present here the proof of \cite[Corollary 5.2]{defant} for the particular case of the Bohnenblust-Hille inequality, with some changes (inspired
in \cite[Theorem 5.1]{defant}) which improve the constants (this task makes
the proof clearer and avoids technicalities from the arguments from
\cite[Theorem 5.1]{defant} in our particular case, with our particular choice
of the variables selected).

Following the proof of \cite[Corollary 5.2]{defant}, and using the optimal
values for the constants of Khinchine inequality (due to Haagerup) and $K_{G}$ for $C_{\mathbb{C},2}$ (see \cite{monats}), the
following estimates can be calculated for $C_{m}$:
\begin{align*}
C_{\mathbb{C},2}  &  =K_{G}\leq1,40491<\sqrt{2},\\
C_{\mathbb{C},m}  &  =2^{\frac{m-1}{2m}}\left(  \frac{C_{\mathbb{C},m-1}%
}{A_{\frac{2m-2}{m}}}\right)  ^{1-\frac{1}{m}}\text{ for }m\geq3,
\end{align*}
and%
\begin{align}
C_{\mathbb{R},2}  &  =\sqrt{2},\text{ }\\
C_{\mathbb{R},m}  &  =2^{\frac{m-1}{2m}}\left(  \frac{C_{\mathbb{R},m-1}%
}{A_{\frac{2m-2}{m}}}\right)  ^{1-\frac{1}{m}}\text{ for }m\geq3.\label{qqssw}
\end{align}
In particular, if $2\leq m\leq13$,
\begin{align}
C_{\mathbb{C},m}  &  \leq2^{\frac{m^{2}+m-6}{4m}}K_{G}^{2/m}\label{qq}\\
\text{ }C_{\mathbb{R},m}  &  \leq2^{\frac{m^{2}+m-2}{4m}}.\nonumber
\end{align}

\begin{remark}
It worths to mention that the above constants are not explicitly calculated in \cite{defant}. Since our procedure (below) will provide better constants for the real case we will not detail the above estimates.
\end{remark}

For the complex case the estimates (\ref{qqssw}) are much better than $C_{\mathbb{C},m}=2^{\frac{m-1}{2}}$ but worst than the constants $C_{\mathbb{C},m}=\left(
\frac{2}{\sqrt{\pi}}\right)  ^{m-1}$ obtained by Defant and Sevilla-Peris \cite{defant2}. However, a more appropriate use of some ideas from \cite{defant} can give better estimates for the real case, as we see in the following result.

\begin{theorem}
For every positive integer $m$ and every real Banach spaces $X_{1},...,X_{m},$
\[
\Pi_{(\frac{2m}{m+1};1)}(X_{1},...,X_{m};\mathbb{R})=\mathcal{L}%
(X_{1},...,X_{m};\mathbb{R})\text{ and }\left\Vert .\right\Vert _{\pi
(\frac{2m}{m+1};1)}\leq C_{\mathbb{R},m}\left\Vert .\right\Vert
\]
with
\begin{align*}
C_{\mathbb{R},2}  &  =2^{\frac{1}{2}}\text{ and }C_{\mathbb{R},3}=2^{\frac
{5}{6}},\\
C_{\mathbb{R},m}  &  \leq2^{\frac{1}{2}}\left(  \frac{C_{\mathbb{R},m-2}%
}{A_{\frac{2m-4}{m-1}}^{2}}\right)  ^{\frac{m-2}{m}}\text{ for }m>3.
\end{align*}
In particular, if $2\leq m\leq14$,
\begin{align*}
\text{ }C_{\mathbb{R},m}  &  \leq2^{\frac{m^{2}+6m-8}{8m}}\text{ if }m\text{
is even}\\
C_{\mathbb{R},m}  &  \leq2^{\frac{m^{2}+6m-7}{8m}}\text{ if }m\text{ is odd.}%
\end{align*}

\end{theorem}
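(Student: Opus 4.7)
The plan is to establish the recursion
\[
C_{\mathbb{R},m}\leq 2^{1/2}\left(\frac{C_{\mathbb{R},m-2}}{A_{(2m-4)/(m-1)}^{2}}\right)^{(m-2)/m}
\]
for $m\geq 4$ by strong induction on $m$ with step size $2$, and then to derive the closed-form estimates for $2\leq m\leq 14$ by iterating this recursion and inserting Haagerup's sharp values of the Khinchine constants $A_{p}$. The base cases are $C_{\mathbb{R},2}=\sqrt{2}$ (Littlewood's $4/3$-inequality) and $C_{\mathbb{R},3}=2^{5/6}$, the latter arising from one application of the one-step recursion (\ref{qqssw}) with $m=3$.

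The inductive step mirrors the argument in the proof of \cite[Corollary 5.2]{defant} but splits off \emph{two} coordinates of $U$ at once instead of one; this is the ``adequate choice of variables'' promised in the introduction. Concretely, given $U\in\mathcal{L}(X_{1},\ldots,X_{m};\mathbb{R})$ and families $(x_{j}^{(k)})$ with $\|(x_{j}^{(k)})\|_{w,1}\leq 1$, the workflow is: (a) apply Blei's inequality (\thmref{b}) to the scalar array $a_{(i_{3},\ldots,i_{m}),(i_{1},i_{2})}=|U(x_{i_{1}}^{(1)},\ldots,x_{i_{m}}^{(m)})|$ with parameters $q=2$, $s_{1}=2(m-2)/(m-1)$, $s_{2}=4/3$, for which a direct calculation gives $w(s_{1},s_{2})=2m/(m+1)$, $f(s_{1},s_{2})/s_{1}=(m-1)/(2m)$, and $f(s_{2},s_{1})/s_{2}=3/(2m)$; (b) to the factor associated with the $(m-2)$-block, apply \thmref{d} twice in the $i_{1},i_{2}$ directions with $r=2(m-2)/(m-1)$ chosen so that the resulting $L^{r}$ exponent matches the $\ell^{s_{1}}$ exponent, identify via Fubini the Rademacher-averaged expression $\sum_{i_{1},i_{2}}r_{i_{1}}(t_{1})r_{i_{2}}(t_{2})U(x_{i_{1}}^{(1)},\ldots,x_{i_{m}}^{(m)})$ as an $(m-2)$-linear form $U_{t_{1},t_{2}}$ of norm at most $\|U\|$ (because the Rademacher sums $\sum_{j}r_{j}(t)x_{j}^{(k)}$ have norm $\leq\|(x_{j}^{(k)})\|_{w,1}\leq 1$), and invoke the $(m-2)$-linear inductive hypothesis; (c) to the factor associated with the $2$-block, invoke Littlewood's $4/3$-inequality, i.e.\ Bohnenblust-Hille in dimension two with the sharp constant $\sqrt{2}$. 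Multiplying the two bounds together with the Blei exponents produces the stated recursion.

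The main obstacle, and what is delicate compared with the one-step proof, is arranging (b) and (c) so that the cotype-$2$ factors coming from \thmref{d} collapse cleanly against the Blei exponents, leaving only a single $A_{(2m-4)/(m-1)}^{-2}$ on the recursion and a bare $\sqrt{2}$ out front rather than a larger composite constant. This is precisely what the matched choices $s_{1}=2(m-2)/(m-1)$ (the Bohnenblust-Hille exponent in dimension $m-2$) and $s_{2}=4/3$ (the Littlewood exponent) accomplish: each block of Blei gets fed into the sharp inequality designed for exactly that number of variables. Once the recursion is in hand, the closed-form bounds $C_{\mathbb{R},m}\leq 2^{(m^{2}+6m-8)/(8m)}$ ($m$ even) and $C_{\mathbb{R},m}\leq 2^{(m^{2}+6m-7)/(8m)}$ ($m$ odd) for $2\leq m\leq 14$ follow by iterating the recursion and substituting Haagerup's explicit formula $A_{p}=\sqrt{2}\bigl(\Gamma((p+1)/2)/\sqrt{\pi}\bigr)^{1/p}$ for the relevant range of $p$.
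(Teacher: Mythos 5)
Your proposal follows essentially the same route as the paper: a step-two induction, Blei's inequality (Theorem~\ref{b}) with $q=2$ and exponents $\{4/3,\,2(m-2)/(m-1)\}$ (you merely swap the roles of $s_1$ and $s_2$, which Blei's symmetric conclusion permits), Theorem~\ref{d} applied in each block with $r$ matched to the block's $\ell^{s}$-exponent, Littlewood's $4/3$-inequality for the $2$-block and the inductive hypothesis for the $(m-2)$-block, and Haagerup's closed form for $A_p$ when $p\leq 1.847$ to obtain the explicit bounds for $2\leq m\leq 14$. Your exponent bookkeeping ($f(s_1,s_2)/s_1=(m-1)/(2m)$ and $f(s_2,s_1)/s_2=3/(2m)$, hence final exponents $1-2/m$ and $2/m$ on the two factors), the identification of the Rademacher-averaged form $U_{t_1,t_2}$ with $\|U_{t_1,t_2}\|\leq\|U\|$, and the treatment of the base cases ($m=2$ via Littlewood, $m=3$ via the one-step recursion~(\ref{qqssw})) all agree with the paper's proof.
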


\begin{proof}
The case $m=2$ is Littlewood's $4/3$ inequality. For $m=3$ we have $C_{3}=2^{\frac{5}{6}}$ from (\ref{qq})

The proof is done by induction, but the case $m$ is obtained as a combination
of the cases $2$ with $m-2$ instead of $1$ and $m-1$ as in \cite[Corollary
5.2]{defant}.

Suppose that the result is true for $m-2$ and let us prove for $m$. Let
$U\in\mathcal{L}(X_{1},...,X_{m};\mathbb{R})$ and $N$ be a positive integer.
For each $1\leq k\leq m$ consider $x_{1}^{(k)},...,x_{N}^{(k)}\in X_{k}$ so
that $\left\Vert (x_{j}^{(k)})_{j=1}^{N}\right\Vert _{w,1}\leq1,$ $k=1,..,m.$

Consider, in the notation of Theorem \ref{b},
\[
q=2, \;s_{1}=\frac{4}{3}, \; \text{ and }s_{2}=\frac{2(m-2)}{(m-2)+1}=\frac{2m-4}{m-1}.
\]
Thus,%
\[
w(s_{1},s_{2})=\frac{2m}{m+1}%
\]
and, from Theorem \ref{b}, we have%
\begin{align*}
&  \displaystyle \left(  \sum\limits_{i_{1},...,i_{m}=1}^{N}\left\vert U(x_{i_{1}}%
^{(1)},...,x_{i_{m}}^{(m)})\right\vert ^{\frac{2m}{m+1}}\right)  ^{(m+1)/2m} \le\\
&  \displaystyle \leq\left(  \sum\limits_{i_{1},...,i_{m-2}=1}^{N}\left\Vert \left(
U(x_{i_{1}}^{(1)},...,x_{i_{m}}^{(m)})\right)  _{i_{m-1},i_{m}=1}%
^{N}\right\Vert _{2}^{\frac{2(m-2)}{(m-2)+1}}\right)  ^{f(s_{2},\frac{4}%
{3})/\frac{2(m-2)}{(m-2)+1}}\\
&  \displaystyle \leq \left(  \sum\limits_{i_{m-1},i_{m}=1}^{N}\left\Vert \left(  U(x_{i_{1}%
}^{(1)},...,x_{i_{m}}^{(m)})\right)  _{i_{1}....,i_{m-2}=1}^{N}\right\Vert
_{2}^{\frac{4}{3}}\right)  ^{f(\frac{4}{3},s_{2})}.
\end{align*}

Now we need to estimate the two factors above. For simplicity, we write below
$dt:=dt_{1}...dt_{m-2}$.

For each $i_{m-1},i_{m}$ fixed, we have (from Theorem \ref{d}),
\begin{align*}
&  \displaystyle \left\Vert \left(  U(x_{i_{1}}^{(1)},...,x_{i_{m}}^{(m)})\right)
_{i_{1}....,i_{m-2}=1}^{N}\right\Vert _{2}^{\frac{4}{3}} \leq\\
&  \displaystyle \leq\left(  A_{2,\frac{4}{3}}^{m-2}\right)  ^{4/3}
{\textstyle\int\limits_{[0,1]^{m-2}}}
\left\vert \sum\limits_{i_{1},...,i_{m-2}=1}^{N}r_{i_{1}}(t_{1})...r_{i_{m-2}%
}(t_{m-2})U(x_{i_{1}}^{(1)},...,x_{i_{m}}^{(m)})\right\vert ^{\frac{4}{3}%
}dt\\
&  \displaystyle =\left(  A_{2,\frac{4}{3}}^{m-2}\right)  ^{4/3}
{\textstyle\int\limits_{[0,1]^{m-2}}}
\left\vert U\left(  \sum\limits_{i_{1}=1}^{N}r_{i_{1}}(t_{1})x_{i_{1}}%
^{(1)},...,\sum\limits_{i_{m-2}=1}^{N}r_{i_{m-2}}(t_{m-2})x_{i_{m-2}}%
^{(m-2)},x_{i_{m-1}}^{(m-1)},x_{i_{m}}^{(m)}\right)  \right\vert ^{\frac{4}%
{3}}dt.%
\end{align*}

Summing over all $i_{m-1,}i_{m}=1,...,N$ we obtain
\begin{flushleft}
$\displaystyle \sum\limits_{i_{m-1},i_{m}=1}^{N}\left\Vert \left(  U(x_{i_{1}}^{(1)},...,x_{i_{m}}^{(m)})\right)  _{i_{1}....,i_{m-2}=1}^{N}\right\Vert_{2}^{\frac{4}{3}} \leq $\\
$\displaystyle \left(A_{2,\frac{4}{3}}^{m-2}\right)^{4/3}{\textstyle\int\limits_{[0,1]^{m-2}}}
\sum\limits_{i_{m-1},i_{m}=1}^{N}\left\vert U\left(  \sum\limits_{i_{1}=1}^{N}r_{i_{1}}(t_{1})x_{i_{1}}^{(1)},...\sum\limits_{i_{m-2}=1}^{N}r_{i_{m-2}%
}(t_{m-2})x_{i_{m-1}}^{(m-2)},x_{i_{m}}^{(m-1)},x_{i_{m}}^{(m)}\right)\right\vert ^{\frac{4}{3}}dt.$
\end{flushleft}

Using the case $m=2$ we thus have
\begin{align*}
&  \sum\limits_{i_{m-1},i_{m}=1}^{N}\left\Vert \left(  U(x_{i_{1}}%
^{(1)},...,x_{i_{m}}^{(m)})\right)  _{i_{1}....,i_{m-2}=1}^{N}\right\Vert
_{2}^{\frac{4}{3}} \le\\
&  \leq\left(  A_{2,\frac{4}{3}}^{m-2}\right)  ^{\frac{4}{3}}%
{\textstyle\int\limits_{[0,1]^{m-2}}}
\left\Vert U\left(  \sum\limits_{i_{1}=1}^{N}r_{i_{1}}(t_{1})x_{i_{1}}%
^{(1)},...\sum\limits_{i_{m-2}=1}^{N}r_{i_{m-1}}(t_{m-2})x_{i_{m-2}}%
^{(m-2)},.,.\right)  \right\Vert _{\pi(\frac{4}{3};1)}^{\frac{4}{3}}dt\\
&  \leq\left(  A_{2,\frac{4}{3}}^{m-2}\right)  ^{\frac{4}{3}}%
{\textstyle\int\limits_{[0,1]^{m-2}}}
\left(  \left\Vert U\right\Vert \sqrt{2}\right)  ^{\frac{4}{3}}dt\\
&  =\left(  A_{2,\frac{4}{3}}^{m-2}\right)  ^{\frac{4}{3}}\left(  \left\Vert
U\right\Vert \sqrt{2}\right)  ^{\frac{4}{3}}.
\end{align*}
Hence%
\[
\left(\sum\limits_{i_{m-1},i_{m}=1}^{N}\left\Vert \left(  U(x_{i_{1}}%
^{(1)},...,x_{i_{m}}^{(m)})\right)  _{i_{1}....,i_{m-2}=1}^{N}\right\Vert
_{2}^{\frac{4}{3}}\right)  ^{\frac{3}{4}}\leq A_{2,\frac{4}{3}}^{m-2}%
\left\Vert U\right\Vert \sqrt{2}.
\]

Next we obtain the other estimate. For each $i_{1},...,i_{m-2}$ fixed, and
$dt:=dt_{m-1}dt_{m},$ we have (from Theorem \ref{d}):
\begin{align*}
&  \left\Vert \left(  U(x_{i_{1}}^{(1)},...,x_{i_{m}}^{(m)})\right)
_{i_{m-1},i_{m}=1}^{N}\right\Vert _{2} \le\\
&  \leq A_{2,s_{2}}^{2}\left(
{\textstyle\int\limits_{[0,1]^{2}}}
\left\vert \sum\limits_{i_{m-1},i_{m}=1}^{N}r_{i_{m-1}}(t_{m-1})r_{i_{m}%
}(t_{m})U(x_{i_{1}}^{(1)},...,x_{i_{m}}^{(m)})\right\vert ^{s_{2}}dt\right)
^{1/s_{2}}\\
&  =A_{2,s_{2}}^{2}\left(
{\textstyle\int\limits_{[0,1]^{2}}}
\left\vert U\left(  x_{i_{1}}^{(1)},...,x_{i_{m-2}}^{(m-2)},\sum
\limits_{i_{m-1}=1}^{N}r_{i_{m-1}}(t_{m-1})x_{i_{m-1}}^{(m-1)},\sum
\limits_{i_{m}=1}^{N}r_{i_{m}}(t_{m})x_{i_{m}}^{(m)}\right)  \right\vert
^{s_{2}}dt\right)  ^{1/s_{2}}.%
\end{align*}
Summing over all $i_{1},....,i_{m-2}=1,...,N$ we get:
\begin{align*}
&  \displaystyle \sum\limits_{i_{1},...,i_{m-2}=1}^{N}\left\Vert \left(  U(x_{i_{1}}%
^{(1)},...,x_{i_{m}}^{(m)})\right)  _{i_{m}=1}^{N}\right\Vert _{2}^{s_{2}}\le\\
&  \leq A_{2,s_{2}}^{2s_{2}}
{\textstyle\int\limits_{[0,1]^{2}}}
\displaystyle \sum\limits_{i_{1},...,i_{m-2}=1}^{N}\left\vert U\left(  x_{i_{1}}%
^{(1)},...,x_{i_{m-2}}^{(m-2)},\sum\limits_{i_{m-1}=1}^{N}r_{i_{m-1}}%
(t_{m-1})x_{i_{m-1}}^{(m-1)},\sum\limits_{i_{m}=1}^{N}r_{i_{m}}(t_{m}%
)x_{i_{m}}^{(m)}\right)  \right\vert ^{s_{2}}dt.
\end{align*}
We thus have, by the induction step,
\begin{align*}
&  \displaystyle \sum\limits_{i_{1},...,i_{m-2}=1}^{N}\left\Vert \left(  U(x_{i_{1}}%
^{(1)},...,x_{i_{m}}^{(m)})\right)  _{i_{m}=1}^{N}\right\Vert _{2}^{s_{2}} \le\\
&  \displaystyle \leq\left(  A_{2,s_{2}}^{2}\right)^{s_{2}} \le%
{\textstyle\int\limits_{[0,1]}}
\displaystyle \left\Vert U\left(  .,...,\sum\limits_{i_{m}-1=1}^{N}r_{i_{m-1}}%
(t_{m-1})x_{i_{m-1}}^{(m-1)},\sum\limits_{i_{m}=1}^{N}r_{i_{m}}(t_{m}%
)x_{i_{m}}^{(m)}\right)  \right\Vert _{\pi(s_{2};1)}^{s_{2}}dt\\
&  \leq\left(  A_{2,s_{2}}^{2}\right)^{s_{2}}%
{\textstyle\int\limits_{[0,1]}}
C_{\mathbb{R},m-2}^{s_{2}}\left\Vert U\right\Vert ^{s_{2}}dt =\left(  A_{2,s_{2}}^{2}\right)  ^{s_{2}}C_{\mathbb{R},m-2}^{s_{2}%
}\left\Vert U\right\Vert ^{s_{2}}%
\end{align*}

and so%
\begin{align*}
&  \left(  \sum\limits_{i_{1},...,i_{m-2}=1}^{N}\left\Vert \left(  U(x_{i_{1}%
}^{(1)},...,x_{i_{m}}^{(m)})\right)  _{i_{m}=1}^{N}\right\Vert _{2}^{s_{2}%
}\right)^{1/s_{2}} \leq\left(  A_{2,s_{2}}^{2}\right)  C_{\mathbb{R},m-2}\left\Vert
U\right\Vert.
\end{align*}
Hence, combining both estimates, we obtain
$$ \left(  \sum\limits_{i_{1},...,i_{m}=1}^{N}\left\vert U(x_{i_{1}}%
^{(1)},...,x_{i_{m}}^{(m)})\right\vert ^{\frac{2m}{m+1}}\right)  ^{(m+1)/2m}  \leq\left[  A_{2,\frac{4}{3}}^{m-2}\sqrt{2}\left\Vert U\right\Vert \right]
^{f(\frac{4}{3},s_{2})}\left[  \left(  A_{2,s_{2}}^{2}\right)  C_{\mathbb{R}%
,m-2}\left\Vert U\right\Vert \right]  ^{f(s_{2},\frac{4}{3})}.$$
Although,
\begin{align*}
f(\frac{4}{3},s_{2})  &  =\frac{4\frac{4}{3}-2\frac{4}{3}\left(  \frac
{2m-4}{m-1}\right)  }{4\left(  \frac{4}{3}+\frac{2m-4}{m-1}\right)  -4\frac
{4}{3}\left(  \frac{2m-4}{m-1}\right)  }=\frac{2}{m},\\
f(s_{2},\frac{4}{3})  &  =\frac{4\left(  \frac{2m-2}{m}\right)  -2\left(
\frac{2m-2}{m}\right)  }{4\left(  1+\frac{2m-2}{m}\right)  -4\left(
\frac{2m-2}{m}\right)  }=1-\frac{2}{m},%
\end{align*}
and, therefore%
\begin{align*}
\left(  \sum\limits_{i_{1},...,i_{m}=1}^{N}\left\vert U(x_{i_{1}}%
^{(1)},...,x_{i_{m}}^{(m)})\right\vert ^{\frac{2m}{m+1}}\right)  ^{(m+1)/2m} &  \leq\left[  A_{2,\frac{4}{3}}^{m-2}\sqrt{2}\left\Vert U\right\Vert \right]
^{\frac{2}{m}}\left[  \left(  A_{2,\frac{2m-4}{m-1}}^{2}\right)
C_{\mathbb{R},m-2}\left\Vert U\right\Vert \right]  ^{1-\frac{2}{m}}\\
&  =\left[  A_{2,\frac{4}{3}}^{m-2}\sqrt{2}\right]  ^{\frac{2}{m}}\left(
A_{2,\frac{2m-4}{m-1}}^{2}\right)  ^{1-\frac{2}{m}}C_{\mathbb{R},m-2}%
^{1-\frac{2}{m}}\left\Vert U\right\Vert \\
&  =\left[  A_{2,\frac{4}{3}}^{m-2}\sqrt{2}\right]  ^{\frac{2}{m}}\left(
A_{2,\frac{2m-4}{m-1}}^{2}C_{\mathbb{R},m-2}\right)  ^{1-\frac{2}{m}%
}\left\Vert U\right\Vert \\
&  =2^{\frac{1}{m}}\left(  A_{2,\frac{4}{3}}A_{2,\frac{2m-4}{m-1}}\right)
^{\frac{2m-4}{m}}\left(  C_{\mathbb{R},m-2}\right)  ^{\frac{m-2}{m}}\left\Vert
U\right\Vert .
\end{align*}

Now let us estimate the constants $C_{\mathbb{R},m}.$ We know that $B_{2}=1$
and, from \cite{haag}, we also know that $A_{p}=2^{\frac{1}{2}-\frac{1}{p}}%
$whenever $p\leq 1.847.$ So, for $2\leq m\leq14$ we have%
\[
A_{\frac{2m-4}{m-1}}=2^{\frac{1}{2}-\frac{m-1}{2m-4}}.
\]

Hence, from (\ref{kcc}) and using the best constants of Khinchine's inequality
from \cite{haag}, we have%

\[
\begin{array}
[c]{c}%
A_{2,\frac{4}{3}}\leq A_{\frac{4}{3}}^{-1}=2^{\frac{3}{4}-\frac{1}{2}},\\
A_{2,\frac{2m-4}{m-1}}\leq A_{\frac{2m-4}{m-1}}^{-1}=2^{\frac{m-1}{2m-4}%
-\frac{1}{2}},
\end{array}
\]

and%
$$
C_{\mathbb{R},m} \leq 2^{\frac{1}{m}}\left(  \left(  2^{\frac{3}{4}%
-\frac{1}{2}}\right)  \left(  2^{\frac{m-1}{2m-4}-\frac{1}{2}}\right)
\right)  ^{\frac{2m-4}{m}}\left(  C_{\mathbb{R},m-2}\right)  ^{1-\frac{2}{m}%
} = 2^{\frac{m+2}{2m}}\left(  C_{\mathbb{R},m-2}\right)  ^{1-\frac{2}{m}}%
$$
and we obtain, if $2\leq m\leq14$,
\begin{align*}
\text{ }C_{\mathbb{R},m}  &  \leq2^{\frac{m^{2}+6m-8}{8m}}\text{ if }m\text{
is even,}\\
C_{\mathbb{R},m}  &  \leq2^{\frac{m^{2}+6m-7}{8m}}\text{ if }m\text{ is odd.}%
\end{align*}

In general we easily get $$C_{\mathbb{R},m}\leq2^{\frac{1}{2}}\left(  \frac{C_{\mathbb{R},m-2}%
}{A_{\frac{2m-4}{m-1}}^{2}}\right)  ^{\frac{m-2}{m}}.$$

The numerical values of $C_{\mathbb{R},m}$, for $m>14$, can be easily calculated by using the exact values of $A_{\frac{2m-4}{m-1}}$, when $m>14$ (see \cite{haag}): $$A_{\frac{2m-4}{m-1}}=\sqrt{2}\left(  \frac{\Gamma\left(  \frac{\frac{2m-4}{m-1}+1}{2}\right)  }{\sqrt{\pi}%
}\right)  ^{(m-1)/(2m-4)}.$$

\end{proof}

In the below table we compare the first constants $C_{m}=2^{\frac{m-1}{2}}$ and the constants that can be derived from \cite[Cor. 5.2]{defant} with the {\em new} constants $C_{\mathbb{R},m}$:

\begin{center}
\begin{tabular}{c|c|c|c}
& {\sf New Constants} & Constants from \cite[Cor. 5.2]{defant}) &  $C_{m}=2^{\frac{m-1}{2}}$\\
\hline
$m=3$  & $2^{20/24} \approx 1.782$ & $2^{5/6}\approx 1.782$ & $2^{2/2}=2$\\
$m=4$  & $2^{32/32}=2$ & $2^{\frac{18}{16}}\approx 2.18$ &  $2^{3/2} \approx 2.828$\\
$m=5$  & $2^{\frac{48}{40}}\approx 2.298$ & $2^{\frac{36}{20}}\approx 2.639$ & $2^{2}=4$\\
$m=6$  & $2^{\frac{64}{48}}\approx 2.520$ & $2^{\frac{40}{24}}\approx 3.17$ & $2^{5/2} \approx 5.656$\\
$m=7$  & $2^{\frac{84}{56}}\approx 2.828$ & $2^{\frac{54}{28}}\approx 3.807$ & $2^{6/2}=8$\\
$m=8$  & $2^{\frac{104}{64}}\approx 3.084$ & $2^{\frac{70}{32}}\approx 4.555$ & $2^{7/2} \approx 11.313$\\
$m=9$  & $2^{\frac{128}{72}}\approx3.429$ & $2^{\frac{88}{36}}\approx5.443$ & $2^{8/2}=16$\\
$m=10$ & $2^{\frac{152}{80}}\approx3.732$ & $2^{\frac{108}{40}}\approx6.498$ & $2^{9/2} \approx 22.627$\\
$m=11$ & $2^{\frac{180}{88}}\approx4.128$ & $2^{\frac{130}{44}}\approx7.752$ & $2^{10/2}=32$\\
$m=12$ & $2^{\frac{208}{96}}\approx4.490$ & $2^{\frac{154}{48}}\approx9.243$ & $2^{11/2} \approx 45.254$\\
$m=13$ & $2^{\frac{240}{104}}\approx4.951$ &$2^{\frac{180}{52}}\approx11.016$ & $2^{12/2}=64$\\
$m=14$ & $2^{\frac{272}{112}}\approx5.383$ & $2^{\frac{13}{28}}\left(\frac{2^{\frac{180}{52}}}{A^{2}_{\frac{26}{14}}}\right)^{1-\frac{1}{14}}\approx 13.457$ &
$2^{13/2} \approx 90.509$
\end{tabular}
\end{center}

In the last line of the above table we have used that $A_{\frac{26}{14}}=\sqrt{2}\left(  \frac{\Gamma\left(  \frac{\frac{26}{14}+1}{2}\right)  }{\sqrt{\pi}%
}\right)  ^{14/26}\approx 0.9736$.

\bigskip
\section{Comparing the asymptotic behavior of the different constants}
In this final section we show that the new constants obtained present a better asymptotic behavior than the previous (including the constants derived from \cite[Cor. 5.2]{defant}).

We have seen that
\[
C_{\mathbb{R},m}\leq2^{\frac{1}{2}}\left(  \frac{C_{\mathbb{R},m-2}%
}{A_{\frac{2m-4}{m-1}}^{2}}\right)  ^{\frac{m-2}{m}}.
\]
As $m\rightarrow\infty$ we know that $A_{2,\frac{2m-4}{m-1}}$ increases to $1$. So,
\[
\lim\sup\frac{C_{\mathbb{R},m}}{\left(  C_{\mathbb{R},m-2}\right)
^{\frac{m-2}{m}}}\leq2^{\frac{1}{2}}.
\]\label{jyu}
For the original constants $C_{m}=2^{\frac{m-1}{2}}$ we have%
\[
\frac{C_{m}}{\left(  C_{m-2}\right)  ^{\frac{m-2}{m}}}=2^{\frac{2m-3}{m}}%
\]
and thus
\[
\lim\frac{C_{m}}{\left(  C_{m-2}\right)  ^{\frac{m-2}{m}}}=4.
\]
and for the constants from \cite[Cor. 5.2]{defant} a similar calculation shows us that
$2^{\frac{1}{2}}$ is replaced by $2$ in (\ref{jyu}).

In resume, the new constants, besides smaller than the others, are those which have the best asymptotic behavior.
\bigskip

{\bf Acknowledgements.} The authors thank A. Defant and P. Sevilla-Peris for important remarks
concerning the complex case.

\end{document}